\newtheorem{theorem}{Theorem}
\newtheorem{proposition}[theorem]{Proposition}
\newcommand{\size}[1]{\left|#1\right|}
\newcommand\Set[2] {\left\{{#1}:\,{#2}\right\}}
\newcommand\Setx[1] {\left\{{#1}\right\}}
\begin{document}{}
\title{\textbf{A note on vertex-critical\\induced subgraphs of shift graphs}}
\author{Tom\'{a}\v{s} Kaiser$^{\:1}$
  \and Mat\v{e}j Stehl\'{\i}k$^{\:2}$
  \and Riste \v{S}krekovski$^{\:3}$}

\date{}

\maketitle

\begin{abstract} Shift graphs, introduced by Erdős and Hajnal in 1964,
 form one of the simplest known non-recursive constructions of
 triangle-free graphs with arbitrarily large chromatic number. In
 this note, we identify a suprising property: for each integer
 $k \geq 1$, the smallest $k$-chromatic shift graph contains a \emph
 {unique} $k$-vertex-critical subgraph. We give an explicit
 description of this subgraph and prove its uniqueness. This provides
 a new and remarkably simple family of triangle-free vertex-critical
 graphs of arbitrarily large chromatic number.
\end{abstract}
\footnotetext[1]{Department of Mathematics and European Centre of
 Excellence NTIS (New Technologies for the Information Society),
 University of West Bohemia, Pilsen, Czech Republic. E-mail: \texttt
 {kaisert@kma.zcu.cz}.}%
\footnotetext[2]{Université Paris Cité, CNRS, IRIF, F-75006, Paris,
 France. E-mail: \texttt{matej@irif.fr}.  Partially supported by ANR
 project DISTANCIA(ANR-17-CE40-0015).}%
\footnotetext[3]{Faculty of Mathematics and Physics, University of
 Ljubljana; Faculty of Information Studies in Novo Mesto;
 Rudolfovo -- Science and Technology Centre, Novo Mesto, Slovenia.
 E-mail: \texttt{skrekovski@gmail.com}. Partially supported by
 Slovenian Research and Innovation Agency ARIS program P1-0383,
 project J1-3002, and the annual work program of Rudolfovo.}%

\section{Introduction}

The study of graphs that are simultaneously highly chromatic and
sparse---typically having large (odd) girth---has had a profound
influence on graph theory. It has played a central role in the
development of both the probabilistic method~\cite{Erd59} and the
topological method~\cite{Lov78}. For background on this rich line of
research, see the surveys~\cite{JT95,Nes13,SS20,SST24,Tof95}.

The earliest constructions of triangle-free graphs with arbitrarily
large chromatic number, due to Tutte (writing under an alias)~\cite
{Des47,Des54}, Zykov~\cite{Zyk49}, Kelly and Kelly~\cite{KK54}, and
Mycielski~\cite{Myc55}, were all recursive. The first non-recursive
constructions were shift graphs~\cite{EH64}, which are the focus of
this note, and Kneser graphs~\cite{Lov78}.

For $N \ge 5$, the \emph{shift graph} $G_{N,2}$ is defined as follows.
The vertices are ordered pairs $(x,y)$ with integers $1 \le x < y \le
N$, and two vertices $(x,y)$ and $(y,z)$ are adjacent whenever $x < y
< z$. Erdős and Hajnal~\cite{EH64} and, independently, Harner and
Entringer~\cite{HE72} proved that $G_{N,2}$ is triangle-free and
\[
  \chi(G_{N,2}) = \lceil \log_2 N \rceil.
\]
See also~\cite[Problem 9.26]{Lov93} and~\cite{ST11}.

The chromatic number increases precisely at the values $N = 2^n + 1$.
Indeed, whenever $2^n+2 \leq N \leq 2^{n+1}$, the graph $G_
{N,2}$ contains multiple induced copies of $G_{2^n+1,2}$. Thus, to
establish $\chi(G_{N,2}) \ge \lceil \log_2 N \rceil$, it suffices to
prove $\chi(G_{2^n+1,2}) \ge n+1$.

A graph is \emph{$k$-vertex-critical} if it is $k$-chromatic and
deleting any vertex reduces its chromatic number to $k-1$.
Vertex-critical graphs are central objects in graph coloring, as
structural questions about all $k$-chromatic graphs often reduce to
the vertex-critical case.

Our main result shows that each shift graph $G_{2^n+1,2}$ contains
a \emph{unique} induced $(n+1)$-vertex-critical subgraph, which we
describe explicitly. This phenomenon is in sharp contrast with Kneser
graphs $\mathrm{KG}(n,k)$, which contain $(n-1)!/2$ isomorphic copies
of the vertex-critical Schrijver graph $\mathrm{SG}(n,k)$~\cite
{Sch78}, along with additional non-isomorphic vertex-critical
subgraphs.

For integers $p \le q$, let $[p,q]$ denote the interval $\{p,p+1,\dots,q\}$.
For $n \ge 2$, define intervals
$I_0,\dots,I_n \subseteq[1,2^n+1]$ by
\[
  I_\ell = [\,2^\ell,\; 2^n - 2^{n-\ell} + 2\,], \qquad 0 \le \ell \le n.
\]
Thus the interval $[1,2^n+1]$ contains $2^\ell - 1$ elements to the
left of $I_\ell$ and $2^{n-\ell} - 1$ elements to the right of $I_\ell$.
Let
\[
  W = \{(x,y) : 1 \le x < y \le 2^n+1 \text{ and } \{x,y\} \subseteq I_\ell 
      \text{ for some }\ell\}.
\]
A geometric depiction of $W$ for $n=4$ is given in Figure~\ref{fig:diagram}.

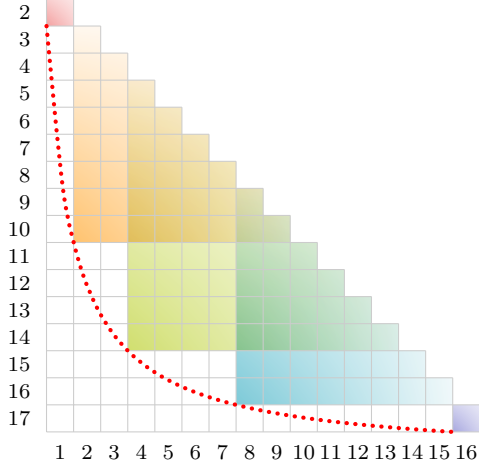
\begin{figure}
\centering
\begin{tikzpicture}[scale=0.36]
\def\n{4}
\pgfmathtruncatemacro\k{2^\n+1}
\pgfmathtruncatemacro\m{\n-1}
\pgfmathtruncatemacro\l{\k-1}

\definecolor{c0}{RGB}{240,120,120}  
\definecolor{c1}{RGB}{255,170,50}   
\definecolor{c2}{RGB}{190,210,50}   
\definecolor{c3}{RGB}{70,180,200}   
\definecolor{c4}{RGB}{130,130,210}  

\begin{scope}[blend group = multiply]
\foreach \i [evaluate=\i as \x using 2^\i,
    evaluate=\i as \xnext using {min(2^(\i+1),\l)},
    evaluate=\i as \y using 2^(\n-\i),
    evaluate=\i as \ynext using 2^(\n-\i-1),
    evaluate=\i as \top using 2^\n-2^\i+2] in {0,...,\n} {
        \shade[lower left=c\i!100!black,
         upper right=c\i!5!white,
         opacity=0.7] (\x,\y) -- (\xnext,\y)
        \foreach \i in {\y,...,\top} {
            -- (\k-\i+2, \i) -- (\k-\i+1, \i) -- (\k-\i+1, \i+1)
        }
    -- cycle;
}
\end{scope}

\foreach \i in {2,...,\k} {
    \draw[black!20] (1,\i) -- (\k-\i+2,\i)
    (\i,\k-\i+2) -- (\i,1);
}

\draw[black!20] (1,1) -- (\k,1)
    (1,\k) -- (1,1);

\foreach \i in {1,...,\l} {
    \node at (\i+0.5,0.25) {\footnotesize $\i$};
}
\foreach \i in {2,...,\k} {
    \node[anchor=east] at (0.75,\k-\i+1.5) {\footnotesize $\i$};
}

\draw[scale=1,domain=1:\k-1,smooth, variable=\x,red, ultra thick, line cap
  =round, dash pattern=on 0pt off 2\pgflinewidth] plot ({\x},{(\k-1)/(\x)});
\end{tikzpicture}
\caption{A geometric representation of the vertex set $W$ of the shift
 graph $G_{17,2}$. Each box with coordinates $(x,y)$ corresponds to a
 vertex $(x,y)$ of $G_{17,2}$. Triangular regions of the same colour
 correspond to pairs $(x,y)$ lying in the same interval $I_
 {\ell}$; the lower-left corners of each region lie on the dotted red
 hyperbola.}
\label{fig:diagram}
\end{figure}

We are now ready to state the main theorem.

\begin{theorem}\label{t:main} For every integer $n \ge 2$, the shift graph $G_
 {2^n+1,2}$ contains a unique induced $(n+1)$-vertex-critical subgraph,
 namely $G_{2^n+1,2}[W]$.
\end{theorem}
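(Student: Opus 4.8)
The plan is to reduce Theorem~\ref{t:main} to two statements about $G:=G_{2^n+1,2}$: (a) $\chi(G[W])=n+1$; and (b) for every vertex $v\in W$, the graph $G-v$ is $n$-colourable. These suffice because a graph is $k$-vertex-critical exactly when it is a minimal $k$-chromatic graph under taking induced subgraphs (if $\chi(H-v)<k$ for every $v$ then every proper induced subgraph has chromatic number $<k$, and conversely $\chi(H-v)\ge\chi(H)-1$). Indeed, by (b), if $H$ is any $(n+1)$-chromatic induced subgraph of $G$ and $v\in W\setminus V(H)$, then $H$ is an induced subgraph of $G-v$, forcing $\chi(G-v)\ge n+1$, a contradiction; hence $W\subseteq V(H)$ for every $(n+1)$-chromatic induced subgraph $H$. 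Combined with (a), the graph $G[W]$ is itself $(n+1)$-chromatic and is contained in every $(n+1)$-chromatic induced subgraph, so it is the unique minimal one; and since $G[W]-v$ is an induced subgraph of the $n$-colourable graph $G-v$ for each $v\in W$, it is $(n+1)$-vertex-critical. This yields the theorem.

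Both (a) and (b) will rest on an elementary reformulation of $n$-colourability: for a set $S$ of pairs, the induced subgraph $G[S]$ is $n$-colourable if and only if there is a map $D\colon[1,2^n+1]\to 2^{[n]}$ with $D(y)\not\subseteq D(x)$ for every $(x,y)\in S$. Given such a $D$, colouring each $(x,y)\in S$ by an arbitrary element of $D(y)\setminus D(x)$ is automatically a proper colouring, since for $x<y<z$ we have $c(x,y)\in D(y)$ while $c(y,z)\notin D(y)$; conversely the down-sets $D(x)=\{c(w,x):(w,x)\in S\}$ work. In these terms, (a) asserts that every $D$ has an \emph{inversion} $D(y)\subseteq D(x)$ (with $x<y$) inside some $I_\ell$, while (b) asserts that for each $v=(p,q)\in W$ there is a $D$ whose \emph{only} inversion is $(p,q)$ itself.

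For (b), I would fix $v=(p,q)\in W$, choose $\ell$ with $\{p,q\}\subseteq I_\ell$ (so $2^\ell\le p<q\le 2^n+2-2^{n-\ell}$), and fix any $A\subseteq[n]$ with $|A|=\ell$. Then place $A$ at positions $p$ and $q$; place the $2^\ell-1$ proper subsets of $A$ at positions $1,\dots,2^\ell-1$; place the $2^{n-\ell}-1$ proper supersets of $A$ at positions $2^n+3-2^{n-\ell},\dots,2^n+1$; and place the remaining $2^n+1-2^\ell-2^{n-\ell}$ sets---all incomparable to $A$---at the leftover positions, in non-decreasing order of cardinality. The inequalities $2^\ell\le p$ and $q\le 2^n+2-2^{n-\ell}$ guarantee that the subsets and supersets of $A$ fit, and that exactly enough incomparable sets remain for the positions strictly between $p$ and $q$; a direct check---using that the proper subsets of $A$ precede everything incomparable to $A$ in the left block, the proper supersets follow everything incomparable in the right block, and the incomparable sets are sorted by size so that no two of them invert across $A$---shows that $(p,q)$ is the only inversion. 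By the reformulation, $G-v$ is $n$-colourable.

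The crux, and the step I expect to be the main obstacle, is (a): no map $D\colon[1,2^n+1]\to 2^{[n]}$ avoids inversions inside all of $I_0,\dots,I_n$. Being inversion-free inside $I_\ell$ forces $D$ to be injective there, so the two large intervals $I_1=[2,2^{n-1}+2]$ and $I_{n-1}=[2^{n-1},2^n]$ already pin down $2^{n-1}+1$ distinct values of $D$ each, while the trivial end intervals $I_0$ and $I_n$ contribute the constraints $D(2)\ne\emptyset$ and $D(2^n)\ne[n]$. I would attempt an induction on $n$ (the case $n=2$, where $G[W]=C_5$, being immediate), exploiting that the left and right margins of $I_\ell$ have sizes exactly $2^\ell-1$ and $2^{n-\ell}-1$: this calibration should make the restriction of $D$ to $[1,2^{n-1}+1]$, once the constraints from $I_0$, $I_n$ and $I_{n-1}$ are used to suppress one colour, into an inversion-free-on-every-interval map for the parameter $n-1$, contradicting the inductive hypothesis. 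Making this precise---carrying out the simultaneous reduction of the ground set from $2^n+1$ to $2^{n-1}+1$ and of the palette from $[n]$ to $[n-1]$ while keeping the nested interval structure aligned---is where the real work lies.
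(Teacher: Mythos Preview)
Your overall decomposition into (a) and (b), the deduction of the main theorem from them, and the set-sequence reformulation are exactly the paper's structure (Theorems~\ref{t:crit-v} and~\ref{t:crit-all}, Proposition~\ref{p:seq}); your $D$ is the complementary version of the paper's sequence $(a_i)$. Your construction for (b) is likewise essentially the paper's, though note that you must also order the proper-subset and proper-superset blocks by cardinality, not only the incomparable block---otherwise inversions can occur within those blocks (e.g.\ $\{1\}$ before $\emptyset$).

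The genuine gap is (a). You offer only a sketch of an induction, restricting $D$ to $[1,2^{n-1}+1]$ and hoping to recover the parameter-$(n-1)$ structure after ``suppressing one colour'', and you concede that making this precise is where the real work lies. The obstacle is concrete: the intervals $I_\ell$ for parameter $n$ do \emph{not} restrict to the intervals for parameter $n-1$ on either half of $[1,2^n+1]$. For instance $I_1\cap[1,2^{n-1}+1]=[2,2^{n-1}+1]$, far longer than the required $[2,2^{n-2}+2]$; there is no evident way to realign the nested intervals while simultaneously shrinking the palette. The paper does not use induction at all. It first \emph{normalizes} a putative $W$-good sequence by iteratively replacing $a_s$ by a proper subset whenever that subset is absent among $a_{s+1},\dots,a_{2^n+1}$; after this, every proper subset of each $a_i$ appears to its right, so $|a_i|=k$ forces $i\le 2^n-2^k+2$. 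Pigeonhole then gives a repeat $a_i=a_j$ (with $i$ maximal), and $W$-goodness forbids $i,j$ from lying in a common $I_\ell$; with $p=\max\{\ell:i\in I_\ell\}$ and $x=|a_i|$ one derives $p+1\le n-x$. The finish is a counting argument: on $J=[i+1,2^n+1]$ the values $a_k$ are distinct and each either fails to contain $a_i$ or has size below $n-p$, giving at most $2^n-\sum_{k=0}^{p}\binom{n-x}{k}$ possibilities, which is shown to be strictly less than $|J|=2^n+1-i$ via $i\le 2^{p+1}-1\le\sum_{k=0}^{p}\binom{n-x}{k}$ (the last inequality coming from $p+1\le n-x$). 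This normalization-plus-counting step is the key idea your proposal is missing.
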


\section{Proof of Theorem~\ref{t:main}}
\label{sec:proof-main}

Fix an integer $n\geq 2$, and let us denote the shift graph $G_{2^n,2}$ by
$G$. Thus, $G$ has chromatic number $\chi(G) = n+1$. Theorem~\ref
{t:main} will follow from Theorems~\ref{t:crit-v} and~\ref
{t:crit-all} below.

Let $a = (a_1,\dots,a_{2^n+1})$ be a sequence of elements of the set $S(n)$ of
all subsets of $[1,n]$, and let $X$ be a subset of the vertex set $V
(G)$. Recall that the elements of $X$ are ordered pairs $(x,y)$ of integers
satisfying $1 \le x < y \le 2^n + 1$. We say that $a$ is \emph{$X$-good} if
for each $i,j$ such that $1 \leq i < j \leq 2^n+1$,
\begin{equation*}
  a_i \not\subseteq a_j \text{ whenever $(i,j)\in X$.}
\end{equation*}
The subgraph of $G$ induced by the set $X$ is denoted by $G[X]$.

The following proposition characterises induced $n$-colourable
subgraphs of $G$ in terms of $X$-good sequences.

\begin{proposition}\label{p:seq}
  For $X\subseteq V(G)$, $\chi(G[X]) \leq n$ if and only if there
  exists an $X$-good sequence.
\end{proposition}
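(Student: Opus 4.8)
I would prove the two implications separately, in each case by writing down an explicit dictionary between proper $n$-colourings of $G[X]$ and $X$-good sequences. The one structural fact I need is that adjacency in $G$ is rigid: two distinct vertices are adjacent exactly when they have the form $(i,j)$ and $(j,k)$ with $i<j<k$, the shared element $j$ being the \emph{larger} coordinate of the first vertex and the \emph{smaller} coordinate of the second (no other way to write a pair of vertices as $(a,b),(b,c)$ with $a<b<c$). Consequently a colouring $c$ of $G[X]$ is proper if and only if there is no triple $i<j<k$ with $(i,j),(j,k)\in X$ and $c(i,j)=c(j,k)$. The plan is to recognise that this ``no forbidden triple'' condition is exactly the consistency condition governing $X$-good sequences.

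For the ``if'' direction I would suppose $a=(a_1,\dots,a_{2^n+1})$ is an $X$-good sequence. Then for every vertex $(i,j)\in X$ (so $i<j$) the set $a_i\setminus a_j$ is nonempty, so I can define a colour $c(i,j)\in[1,n]$ by picking an arbitrary element of $a_i\setminus a_j$ (say its minimum, to be definite). To check that $c$ is proper, take adjacent vertices $(i,j)$ and $(j,k)$ with $i<j<k$: then $c(i,j)\in a_i\setminus a_j$ gives $c(i,j)\notin a_j$, whereas $c(j,k)\in a_j\setminus a_k$ gives $c(j,k)\in a_j$, so $c(i,j)\neq c(j,k)$. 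Hence $\chi(G[X])\le n$.

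For the ``only if'' direction I would start from a proper colouring $c\colon X\to[1,n]$ of $G[X]$ and, for each $m\in[1,2^n+1]$, set
\[
  a_m \;=\; \Set{c(m,j)}{j>m \text{ and } (m,j)\in X}\ \subseteq\ [1,n].
\]
To see that $a=(a_1,\dots,a_{2^n+1})$ is $X$-good, fix $(i,j)\in X$ with $i<j$. On one hand $c(i,j)\in a_i$ directly from the definition (take $m=i$). On the other hand, if $c(i,j)\in a_j$ then $c(i,j)=c(j,k)$ for some $k>j$ with $(j,k)\in X$; but $(i,j)$ and $(j,k)$ are then adjacent in $G[X]$, contradicting properness of $c$. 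So $c(i,j)\in a_i\setminus a_j$, whence $a_i\not\subseteq a_j$, as required.

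I do not expect a serious obstacle: the proof is two short explicit constructions, and the only thing to handle carefully is the bookkeeping in the adjacency $(i,j)\sim(j,k)$ — the middle index $j$ plays the ``right-hand side'' role of the non-inclusion for the pair $(i,j)$ and the ``left-hand side'' role for the pair $(j,k)$, which is precisely why properness of $c$ and $X$-goodness of $a$ encode the same constraint, and why the two constructions are mutually inverse at the level of this constraint.
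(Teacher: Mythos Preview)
Your proof is correct and follows essentially the same approach as the paper's: in both directions you use the identical constructions (define $a_m$ as the set of colours used on vertices with first coordinate $m$; conversely, colour $(i,j)$ by an element of $a_i\setminus a_j$), and the verification via the witness $c(i,j)\in a_i\setminus a_j$ matches the paper's argument line for line.
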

\begin{proof}
  This is essentially the Erd\H{o}s--Hajnal argument. Suppose that there
  exists an $n$-colouring $c$ of $G[X]$. For $i\in [1,2^n+1]$, let
  \begin{equation*}
    c_i = \Set{c((i,j))}{i < j \leq 2^n+1\text{ and } (i,j)\in X}.
  \end{equation*}
  Note that $c_{2^n+1} = \emptyset$. Furthermore,
  $c_1,\dots,c_{2^n+1}$ is $X$-good, since if $(i,j)\in X$, then
  $c((i,j))\in c_i$ while $c((i,j))\notin c_j$ because of the
  adjacency between $(i,j)$ and the vertex $(j,k)$ for each $k$ such
  that $j < k \leq 2^n+1$. Thus, $c((i,j))$ testifies that
  $c_i \not\subseteq c_j$.

  On the other hand, given an $X$-good sequence
  $(a_1,\dots,a_{2^n+1})$, we colour each vertex $(i,j)\in X$ with an
  arbitrary colour from the nonempty set $a_i\setminus a_j$.
\end{proof}

The following theorem shows that the critical vertices of $G$ are
precisely the vertices in $W$.

\begin{theorem}\label{t:crit-v}
  For $v\in V(G)$, $\chi(G-v) < \chi(G)$ if and only if $v\in W$.
\end{theorem}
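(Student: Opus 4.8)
The plan is to run everything through Proposition~\ref{p:seq}. Since $\chi(G)=n+1$, for a vertex $v=(p,q)$ with $p<q$ the condition $\chi(G-v)<\chi(G)$ says exactly that $\chi(G[X])\le n$ for $X=V(G)\setminus\{v\}$, which by the proposition is equivalent to the existence of an $X$-good sequence $a=(a_1,\dots,a_{2^n+1})$ of subsets of $[1,n]$. So I would prove the equivalence ``an $X$-good sequence exists $\iff (p,q)\in W$'' in both directions.

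\noindent\emph{From an $X$-good sequence to membership in $W$.} Every pair $(i,j)$ with $1\le i<j\le 2^n+1$ other than $(p,q)$ lies in $X$, hence $a_i\not\subseteq a_j$; in particular $a_i\ne a_j$ for all such pairs, and (since any three positions contain a pair different from $(p,q)$) no value is used at three positions. Therefore every subset of $[1,n]$ occurs exactly once among $a_1,\dots,a_{2^n+1}$, except the value $A:=a_p=a_q$, which occurs exactly twice. I would then locate the proper supersets and proper subsets of $A$: for $i<p$ the pairs $(i,p)$ and $(i,q)$ give $a_i\not\subseteq A$; for $i>q$ the pairs $(p,i)$ and $(q,i)$ give $A\not\subseteq a_i$; and for $p<i<q$ the pairs $(p,i)$ and $(i,q)$ force $a_i$ to be incomparable to $A$. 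Consequently all proper supersets of $A$ occur among $a_1,\dots,a_{p-1}$ and all proper subsets of $A$ occur among $a_{q+1},\dots,a_{2^n+1}$. With $k=|A|$ this yields $2^{n-k}-1\le p-1$ and $2^{k}-1\le 2^n+1-q$, i.e. $2^{n-k}\le p<q\le 2^n-2^{k}+2$, which is precisely the statement $\{p,q\}\subseteq I_{n-k}$, so $(p,q)\in W$.

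\noindent\emph{From membership in $W$ to an $X$-good sequence.} If $(p,q)\in W$, fix $\ell$ with $2^{\ell}\le p<q\le 2^n-2^{n-\ell}+2$ and set $k=n-\ell$. Choose \emph{any} $A\subseteq[1,n]$ with $|A|=k$, and partition the remaining $2^n-1$ subsets into $U$ (proper supersets of $A$), $D$ (proper subsets of $A$), and $M$ (subsets incomparable to $A$), of sizes $2^{n-k}-1$, $2^{k}-1$, and $2^n-2^{n-k}-2^{k}+1$. The bounds on $p,q$ leave exactly enough room to fill the $2^n+1$ positions as follows: positions $1,\dots,p-1$ receive all of $U$ together with $p-2^{n-k}$ members of $M$; positions $p$ and $q$ receive $A$; positions $p+1,\dots,q-1$ receive a further $q-p-1$ members of $M$; and positions $q+1,\dots,2^n+1$ receive all of $D$ together with the remaining members of $M$. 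I would choose the three-way split of $M$ to be consecutive chunks of one fixed linear order on $M$ in which no set precedes any of its proper subsets, and order each of the four blocks by that same rule. A routine verification then shows the resulting sequence is $X$-good.

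I expect the converse direction to be the main obstacle: one must show that the counting inequalities derived above are not merely necessary but sufficient, which means producing an explicit valid arrangement and checking the non-containment condition across blocks. Most cross-block pairs are automatic --- a proper superset of $A$ is never contained in a proper subset of, or in a set incomparable to, $A$, and symmetrically --- so the only delicate case is two members of $M$ lying in different blocks, where a containment could a priori occur; this is exactly what the choice of the $M$-split as consecutive chunks of a single suitable linear order rules out. The forward direction should be comparatively painless once one spots the rigidity the proposition imposes (every subset of $[1,n]$ used, with a single doubled value at positions $p$ and $q$); the leftover bookkeeping --- recognising $\{p,q\}\subseteq I_{n-k}$ as $2^{n-k}\le p<q\le 2^n-2^{k}+2$ --- is just the substitution $\ell=n-k$.
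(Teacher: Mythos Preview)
Your approach is essentially the paper's. The forward direction is the same pigeonhole-plus-counting argument: the unique repeated value $A=a_p=a_q$ forces all proper supersets of $A$ to sit left of $p$ and all proper subsets right of $q$, giving $\{p,q\}\subseteq I_{n-|A|}$. The converse is also the same construction (supersets of $A$ before position $p$, subsets after $q$, incomparables filling the rest); the paper simply orders each group by non-increasing cardinality, which is a particular instance of your ``linear extension'' idea.

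One slip to fix: your ordering condition on $M$ (and on the blocks) is stated backwards. ``No set precedes any of its proper subsets'' means that whenever $B\subsetneq C$ the subset $B$ comes first, so you could get $a_s\subsetneq a_t$ with $s<t$, violating $X$-goodness. You want the opposite: no set precedes any of its proper \emph{supersets} (equivalently, supersets come first; non-increasing size works). With that correction your cross-block analysis is fine: a member of $U$ cannot be contained in a member of $M\cup D\cup\{A\}$, a member of $M\cup\{A\}$ cannot be contained in a member of $D$, and the consecutive-chunk choice of the $M$-split (in the corrected order) handles $M$--$M$ pairs across blocks.
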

\begin{proof}
  Suppose that $G-v$ has an $n$-colouring $c$. We want to show that
  $v\in W$. By Proposition~\ref{p:seq}, there is a $W$-good sequence
  $(a_1,\dots,a_{2^n+1})$. Since there are only $2^n$ subsets of
  $\Setx{1,\dots,n}$, we have $a_i = a_j$ for some
  $1\leq i < j \leq 2^n+1$. By the definition of an $X$-good sequence,
  we must have $(i,j) = v$. Furthermore, with the exception of the
  equality $a_i=a_j$, all the elements of the sequence are pairwise
  distinct. Hence, each subset of $[1,n]$ appears in the
  sequence.

  Let us write $\ell = \size{a_i}$. There are $2^\ell-1$ proper
  subsets of $a_i$, each of which must appear among
  $a_{j+1},\dots,a_{2^n+1}$, by the definition of $W$-good
  sequence. Similarly, each of the $2^{n-\ell}-1$ proper supersets of
  $a_i$ appears among $a_1,\dots,a_{i-1}$. Hence, $i\geq 2^{n-\ell}$
  and $j \leq 2^n-2^\ell+2$, which implies that $i,j\in I_{n-\ell}$
  and therefore $(i,j)\in W$.

  Conversely, assume that $v = (i,j)\in W$ --- say, $i,j\in I_r$ for
  some $r$, $0\leq r\leq n$. We construct a $(W-v)$-good sequence
  which will show that $\chi(G-v) = n$. Let $A =
  \Setx{1,\dots,n-r}$. We set $a_i = a_j = A$. Since $i \geq 2^r$, we
  can position all proper supersets of $A$ into the subsequence
  $a_1,\dots,a_{i-1}$. To obtain a $(W-v)$-good sequence in the end,
  we do it in such a way that the size of the sets of the sequence is
  non-increasing. Since $j \leq 2^n - 2^{n-r} + 2$, all proper subsets
  can be placed in $a_{j+1},\dots,a_{2^n+1}$ (in a similar
  non-increasing way). Finally, we put the subsets of $[1,n]$ which do
  not yet appear in the sequence in the places left free so far. The
  only required property is that, again, the size of these sets is
  non-increasing; there are no restrictions on the position relative
  to $A$ or its subsets and supersets. It is not hard to check that
  for any ordering satisfying the above requirements, we obtain a
  $(W-v)$-good sequence, which implies that $\chi(G-v) = n$.
\end{proof}

It remains to show that $W$ induces a $(n+1)$-chromatic subgraph of $G$.

\begin{theorem}\label{t:crit-all}
  $\chi(G[W]) = \chi(G) = n+1.$
\end{theorem}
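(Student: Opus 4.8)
The bound $\chi(G[W]) \le n+1$ is immediate, since $G[W]$ is an induced subgraph of $G = G_{2^n+1,2}$ and $\chi(G) = n+1$. For the reverse inequality I would argue by contradiction using Proposition~\ref{p:seq}: assuming $\chi(G[W]) \le n$, fix a $W$-good sequence $(a_1,\dots,a_{2^n+1})$ of subsets of $[1,n]$; the goal is to show no such sequence exists. First I would extract the local structure. For each $\ell$, since every $i<j$ in $I_\ell$ is a pair of $W$, the restriction $(a_i)_{i\in I_\ell}$ satisfies $a_i\not\subseteq a_{i'}$ for all $i<i'$ in $I_\ell$; hence the sets occurring in $I_\ell$ are pairwise distinct and are listed with no set preceding one of its supersets, so $a_{2^\ell}$ is a maximal, and $a_{2^n-2^{n-\ell}+2}$ a minimal, element of that family. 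Two consequences: (a) if $a_i=a_j$ with $i<j$ then $i,j$ lie in no common interval, whence (the highest-indexed interval through position $k$ being $I_{\lfloor\log_2 k\rfloor}$) $\lfloor\log_2 i\rfloor<\lfloor\log_2 j\rfloor$, so the occurrences of any fixed set have strictly increasing left-level; and (b) combining this with the $\subseteq$-order inside each interval, $[1,n]$ can occur only at position $1$ and, by the symmetric argument, $\emptyset$ only at position $2^n+1$. Since $|S(n)|=2^n<2^n+1$, some value repeats, and by (b) it is a proper nonempty subset.

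The heart of the proof is to turn this into a contradiction, and here I would run a ratchet along the positions. The $I_0$-constraint $a_1\not\subseteq a_2$ forces $a_2\ne[1,n]$, so $a_2$ is a maximal set of $I_1$ other than $[1,n]$; one then propagates along the chain $I_1,I_2,\dots,I_{n-1}$ of successively overlapping intervals, using that the two ``supersets-first'' orderings must agree on $I_\ell\cap I_{\ell+1}$ and that the only positions where a coincidence $a_i=a_j$ can occur are the ``caps'' $I_\ell\setminus I_{\ell+1}$ versus $I_{\ell+1}\setminus I_\ell$ --- so the single available repeat can absorb at most one mismatch over the whole chain. Tracking which sets are forced into the extreme positions $2^\ell$ and $2^n-2^{n-\ell}+2$ of each interval, one reaches the right end, where the symmetric $I_n$-constraint $a_{2^n}\not\subseteq a_{2^n+1}$ demands $a_{2^n}\ne\emptyset$, contradicting what the ratchet has forced there. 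A clean way to organise this is likely induction on $n$: the base case $n=2$ is the observation that $G[W]$ is a $5$-cycle, and the inductive step would compress a length-$(2^n+1)$ $W$-good sequence over $[1,n]$ to a length-$(2^{n-1}+1)$ $W$-good sequence over $[1,n-1]$ --- for instance by reading off the $\subseteq$-good block on $I_1=[2,2^{n-1}+2]$, which has exactly $2^{n-1}+1$ positions, and using the coordinate $n$ to shrink the alphabet.

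The main obstacle is making this reduction rigorous. The intervals $I_\ell$ overlap in long segments, the intervals for parameter $n-1$ are not sub-intervals of those for $n$, and the alphabet bound $2^n$ is met almost exactly (top and bottom pinned to the two ends, one middle set used twice), so the counting leaves essentially no slack. One must argue carefully that no rearrangement of subsets of $[1,n]$ satisfies simultaneously the two extremal constraints at positions $1$ and $2^n+1$ and the $n+1$ nested ``supersets-first'' orderings on $I_0,\dots,I_n$; equivalently, that every attempt forces some containment $a_i\subseteq a_j$ with $\{i,j\}\subseteq I_\ell$. Checking $n=2,3$ by hand as sanity checks and then isolating the right invariant for the ratchet --- or the right way to project out the last coordinate --- is where the real work lies.
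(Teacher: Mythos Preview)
Your setup is sound --- the reduction to $W$-good sequences via Proposition~\ref{p:seq}, the observation that each $I_\ell$ carries a ``no subset precedes a superset'' constraint, and the consequences that $[1,n]$ is pinned to position $1$ and $\emptyset$ to position $2^n+1$ are all correct and match the paper's framework. But the proposal has a genuine gap at exactly the point you yourself flag: the ``ratchet'' is never defined, no invariant is stated, and the suggested induction (projecting out a coordinate to pass from $n$ to $n-1$) is not carried out. Your phrase ``the single available repeat can absorb at most one mismatch'' is also unjustified: pigeonhole gives \emph{at least} one coincidence $a_i=a_j$, not exactly one, so an argument built on uniqueness of the repeat would need a separate justification. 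As written, the proposal is an outline of desiderata rather than a proof.

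The paper's argument is quite different from either of your sketches and is worth knowing. Instead of propagating constraints interval by interval, it first \emph{normalises} the $W$-good sequence: while there is a position $s$ such that some proper subset of $a_s$ does not appear among $a_{s+1},\dots,a_{2^n+1}$, take the largest such $s$ and replace $a_s$ by that subset; one checks this preserves $W$-goodness. After finitely many steps every proper subset of each $a_i$ occurs to its right, which immediately gives the size bound $|a_i|=k \Rightarrow i\le 2^n-2^k+2$. Now choose a coincidence $a_i=a_j$ with $i$ maximal, set $x=|a_i|$ and $p=\max\{\ell:i\in I_\ell\}$, and count: for $k>i$ the set $a_k$ must either fail to contain $a_i$ (by $W$-goodness on $I_p$) or have size $<n-p$ (by the size bound), and the number of such subsets of $[1,n]$ is $2^n-\sum_{k=0}^p\binom{n-x}{k}$, which one shows is strictly less than $|[i+1,2^n+1]|=2^n+1-i$ using $i<2^{p+1}$ and $p+1\le n-x$. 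The normalisation step is the idea your proposal is missing; it replaces the delicate interval-by-interval tracking you envisage by a single global counting inequality.
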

\begin{proof}
  By contradiction. Consider a $W$-good sequence
  $(a_1,\dots,a_{2^n+1})$. We first modify the sequence to another
  $W$-good sequence using the following procedure. If there is an
  index $r$ such that some proper subset $a'_r$ of $a_r$ is not
  included among $a_{r+1},\dots,a_{2^n+1}$, let $s$ be the largest
  such index and replace $a_s$ (at position $s$) in the sequence with
  $a'_s$. We prove that the resulting sequence is $W$-good. Suppose
  not. Since $a'_s\subseteq a_s$, there cannot be $i < s$ such that
  $(i,s)\in W$ and $a_i\subseteq a'_s$ as this would contradict the
  $W$-goodness of the original sequence. Thus, let us consider $j > s$
  such that $(s,j)\in W$. In this case, $a'_s$ cannot be a subset of
  $a_j$, because it would be a proper subset (by the choice of $s$),
  so by the maximality of $s$, it would be included among
  $a_{j+1},\dots,a_{2^n+1}$, contradicting the choice of $s$. Thus,
  the resulting sequence is $W$-good, as claimed.

  Relabel the new element $a'_s$ to $a_s$ and repeat the above step
  for as long as it is possible, and then terminate. Clearly, the
  algorithm only takes a finite number of steps and terminates with a
  $W$-good sequence $a_1,\dots,a_{2^n+1}$ with the following property:
  \begin{itemize}
  \item[] For each $i\in [1,2^n+1]$ and every proper subset $b \subset a_i$,
   there is $j \in [i+1,2^n+1]$ such that $a_j = b$.
  \end{itemize}
 
  Observe that, as a
  consequence, $a_{2^n+1} = \emptyset$. Furthermore, for each
  $i\in[1,2^n+1]$,
  \begin{equation}
    \label{eq:1}
    \text{if }\size{a_i} = k, \text{ then } i \leq 2^n - 2^k + 2,
  \end{equation} since there has to be space to the right of $a_i$ for $2^k-1$
   proper subsets of $a_i$. Note also that the right hand side in the
   inequality of~\eqref{eq:1} is the right end of the interval $I_{n-k}$.

  Since the number of subsets of $[1,n]$ is $2^n$ while the length of
  $[1,2^n+1]$ is $2^n+1$, there must exist a pair $i,j\in [1,2^n+1]$
  such that:
  \begin{itemize} 
    \item[] $i < j$ and $a_i = a_j$; moreover, we choose this pair
  so that $i$ is as large as possible.
  \end{itemize}
  Let us write $x = \size{a_i}$.

  No interval among $I_0,\dots,I_n$ contains both $i$ and $j$, for
  otherwise the pair $(i,j)$ would belong to $W$ (by the definition of
  $W$), and the fact that $a_i=a_j$ would contradict the $W$-goodness
  of the sequence.

  Consequently, if we set
  \begin{align*}
    p &= \max\Set\ell{i\in I_\ell},\\
    q &= \min\Set\ell{j\in I_\ell},
  \end{align*}
  then $p < q$. Note that since $I_q\setminus I_{q-1}$ contains an
  element of size $x$ (namely, $a_j$),~\eqref{eq:1} implies that $q\leq n-x$ and
  therefore
  \begin{equation}
    \label{eq:5}
    p+1\leq n-x.
  \end{equation}

  Consider the interval $J = [i+1,2^n+1]$. No two elements of the
  subsequence $a_{i+1},\dots,a_{2^n+1}$ are identical. We will show
  that there are not enough possible values for these elements when
  the sequence is $W$-good.

  Note that no element $i^*$ of $J \cap I_p$ has the property that
  $a_{i^*}$ is a superset of $a_i$
  (by $W$-goodness) while no element of $J\setminus I_p$ (i.e., no
  $a_k$ with $k > 2^n-2^{n-p}+2$) has $n-p$ elements or more
  by~\eqref{eq:1}.

  It follows that for $k\in J$ and $b = a_k$,
  \begin{equation}
    \label{eq:2}
    a_i\not\subseteq b, \text{ or } \size{b} < n-p.
  \end{equation}

  Let $B$ be the set of sets $b\subset[1,n]$
  satisfying~\eqref{eq:2}. We have
  \begin{equation*}
    \size B = (2^n - 2^{n-x}) + \sum_{k=0}^{n-p-1-x} \binom{n-x}{k},
  \end{equation*}
  with the term in brackets expressing the number of subsets of
  $[1,n]$ which are not supersets of $a_i$, and the sum expressing the
  number of supersets of $a_i$ with fewer than $n-p$
  elements. Rearranging, we get
  \begin{equation*}
    \size B = 2^n - \sum_{k=0}^p\binom{n-x}k.
  \end{equation*}

  We will be able to reach the desired contradiction if we can show
  that $\size B < \size J$, since each $a_k$ with $k\in J$ is a set
  from $B$ and there is no repetition. Since $\size J = 2^n+1-i$, this
  amounts to showing that
  \begin{equation}\label{eq:3}
    i \leq \sum_{k=0}^p \binom{n-x}k.
  \end{equation}

  We know that $i\in I_p\setminus I_{p+1}$, and therefore $i \leq
  2^{p+1}-1$. Hence,~\eqref{eq:3} will follow if we can show
  \begin{equation}
    \label{eq:4}
    2^{p+1}-1 \leq \sum_{k=0}^p \binom{n-x}k.
  \end{equation}
  This is what we prove now. First of all, recall that $p+1\leq n-x$
  by~\eqref{eq:5}. In case of equality,~\eqref{eq:4} follows (with
  equality) from the binomial theorem. If we increase $n-x$, the right
  hand side of~\eqref{eq:4} clearly increases while the left hand side
  remains constant. This proves~\eqref{eq:4} and hence
  also~\eqref{eq:3}, concluding the proof of Theorem~\ref{t:crit-all}.
\end{proof}

\bibliographystyle{plain}
\bibliography{shift_critical}
\end{document}